\numberwithin{equation}{section}
\newtheorem{thm}{Theorem}[section]
\newtheorem*{thmA}{Theorem A}
\newtheorem*{thmB}{Theorem B}
\newtheorem*{thmC}{Theorem C}
\newtheorem{problem}[thm]{Problem}
\newtheorem*{cor*}{Corollary}
\newtheorem{lem}[thm]{Lemma}
\newtheorem*{conj*}{Conjecture}
\newtheorem*{prob*}{Problem}
\theoremstyle{definition}
\newtheorem{defn}[thm]{Definition}
\theoremstyle{remark}
\newtheorem{rem}[thm]{Remark}
\newcommand{\ds}{\displaystyle}
\begin{document}
\title{On Passage to over-groups of finite indices of The Farrell-Jones conjecture}

\author[Kun Wang]{Kun Wang}
\address{Department of Mathematics,
Vanderbilt University,
1326 Stevenson Center,
Nashville, TN 37240}
\email{kun.wang@vanderbilt.edu}

\begin{abstract}
We use the controlled algebra approach to study the problem that whether the 
Farrell-Jones conjecture is closed under passage to over-groups of finite indices. Our study shows that this problem is closely related to a general problem in algebraic $K$- and $L$-theories. We use induction theory to study this general problem.  This requires an extension of the classical induction theorem for $K$- and $L$- theories of finite groups with coefficients in rings to with twisted coefficients in additive categories. This extension is well-known to experts, but a detailed proof does not exist in the literature. We carry out a detailed proof.  This extended induction theorem enables us to make some reductions for the general problem, and therefore for the finite index problem of the Farrell-Jones conjecture. 

\end{abstract}

\maketitle
\section{Introduction}
The algebraic $K$- and $L$-groups $K_n(\mathbb Z[G]), L_n(\mathbb Z[G]), n\in\mathbb Z$ of the integral group ring $\mathbb Z[G]$, where $G$ is the fundamental group of a topological space, contain important information about the underlying space. The key tool in understanding these groups is the Farrell-Jones conjecture (FJC for short). Roughly speaking, the conjecture states that for every group $G$, the $K$- and $L$- theories of $\mathbb Z[G]$ is determined by those of its virtually cyclic subgroups and the group homology of $G$. 
The conjecture was firstly formulated in \cite{fj} by Farrell and Jones. Then in \cite{DL},
Davis and L\"uck
gave a general framework for the formulations of various isomorphism conjectures in $K$-and $L$-theories. In these formulations, the coefficients are untwisted rings.  Later on, the conjecture was extended by Bartels and Reich \cite{BR} to allow for coefficient in any additive category $\mathcal A$ with a right $G$-action.  This more general version  has better inheritance properties.  For example, the more general version is closed under taking subgroups, finite product of groups and free products of groups. See \cite[Section 2.3]{BFL} for a summary.  The conjecture has many important applications in geometry, topology and algebra. See \cite{LR} for a survey. 

In this paper, we deal with the more general version of the FJC and study the problem that whether the conjecture is closed under passage to over-groups of finite indices. That is,  if $H<G$ is of finite index and $H$ satisfies the FJC with coefficient in every additive category, whether $G$ also satisfies the FJC with coefficient in every additive category.

Our study leads us first to the proposal of the following general question in algebraic $K$- and $L$-theories:

\begin{problem}\label{q1} Let $\mathcal A$ be any additive category with the property that for every finite group $I$, the algebraic $K$-theory of the untwisted group additive category $\mathcal A[I]$ vanishes, i.e. $K_n(\mathcal A[I])=0, \forall n\in\mathbb Z$. Does it follow that for every finite group $F$ with a right action $\alpha$ on $\mathcal A$,  the algebraic 
$K$-theory of the twisted group additive category $\mathcal A_\alpha[F]$ also vanish? What about $L$-theory? 
\end{problem}

\begin{rem} The notions of \textit{group additive category} and \textit{twisted group additive category} are natural generalizations of the notions of group ring and twisted group ring. When a group $G$ acts on an associative ring with unit $R$ from the left via $\alpha$,  then there is a natural induced action, which we still denote by $\alpha$,  of $G$ on the additive category  $R^f_\oplus$ of finitely generated left free $R$-modules from the right. Then the group additive category $(R^f_\oplus)_\alpha[G]$ is naturally equivalent 
to the additive category $\big(R_\alpha[G]\big)^f_\oplus$ of finitely generated left free $R_\alpha[G]$-modules, where $R_\alpha[G]$ is the twisted group ring of $R$ over $G$ by $\alpha$. More details can be found in \cite{BR}. This notion will be recalled in Definition \ref{groupcat},  since it will be important to our treatment. 
\end{rem}

\begin{rem} Similar questions as above  arise and have been studied in various places of mathematics.  For example, it is known that if the \textit{Bass Nil-groups} of a ring $R$ vanish, i.e. $NK_n(R)=0, n\in\mathbb Z$, then $NK_n(R[H])$ is rationally trivial for any finite group $H$.   This was proved by 
Weibel \cite{We} for some special rings and by Hambleton and  L\"uck \cite{HL} for general rings. Recall that the\textit{ Bass Nil-group} $NK_n(R)$ of a ring $R$ is defined to be the kernel of the map $K_n(R[t])\rightarrow K_n(R)$, that is induced by $R[t]\rightarrow R, t\mapsto 0$.
\end{rem}

The reason that we are interested in the above question lies in the following theorem:

\begin{thmA} 
The Farrell-Jones conjecture is closed under 
passage to over-groups of finite indices if Problem \ref{q1} has a positive answer.
\end{thmA}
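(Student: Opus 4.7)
The plan is to use the controlled algebra framework of Bartels--Reich. Fix an additive category $\mathcal{A}$ with right $G$-action; the goal is to show that the assembly map
\[
H_n^G(E_{\mathcal{VC}(G)}G;\mathbb{K}_{\mathcal{A}})\longrightarrow K_n(\mathcal{A}[G])
\]
is an isomorphism for all $n\in\mathbb{Z}$. In the controlled algebra picture this is equivalent to the vanishing of the $K$-theory of an ``obstruction'' additive category $\mathcal{O}^G(\mathcal{A})$ obtained from $G$-equivariant continuously controlled modules over $E_{\mathcal{VC}(G)}G\times[1,\infty)$ modulo the bounded part. There is an analogous $H$-equivariant category $\mathcal{O}^H(\mathcal{A}|_H)$ whose $K$-theory vanishing encodes FJC for $H$ with coefficients in the restricted category $\mathcal{A}|_H$. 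Since $E_{\mathcal{VC}(G)}G$, regarded as an $H$-CW complex, is a model for $E_{\mathcal{VC}(H)}H$ (every virtually cyclic subgroup of $H$ remains virtually cyclic in $G$, so all $H$-cell stabilizers lie in $\mathcal{VC}(H)$), the hypothesis that $H$ satisfies FJC with arbitrary additive coefficients forces $K_n(\mathcal{O}^H(\mathcal{A}|_H))=0$. Forgetting $G$-equivariance down to $H$-equivariance supplies a functor $\mathcal{O}^G(\mathcal{A})\to\mathcal{O}^H(\mathcal{A}|_H)$, so the task reduces to controlling its fiber.

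The key identification is that this fiber fits into a Karoubi-type exact sequence of additive categories and is equivalent to a twisted group additive category $\mathcal{B}_\alpha[F]$, where $F$ is a finite group built from the coset space $G/H$ (the action groupoid, or simply $G/H$ itself when $H\triangleleft G$) and $\mathcal{B}$ is an additive category assembled from the $H$-equivariant controlled data together with an induced $F$-action $\alpha$. Because $H$ satisfies FJC with coefficients in \emph{every} additive category with $H$-action, this hypothesis applies in particular to auxiliary categories built from $\mathcal{B}$ and any finite group $I$, and yields the untwisted vanishing $K_n(\mathcal{B}[I])=0$ for all finite $I$ and all $n\in\mathbb{Z}$, which is precisely the hypothesis of Problem~\ref{q1}. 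A positive answer to Problem~\ref{q1} then delivers $K_n(\mathcal{B}_\alpha[F])=0$, whence $K_n(\mathcal{O}^G(\mathcal{A}))=0$ via the long exact sequence associated with the fiber sequence of $K$-theory spectra, i.e.\ FJC for $G$. The same argument, with the quadratic $L$-theory spectrum replacing $\mathbb{K}$ and the $L$-statement of Problem~\ref{q1} replacing the $K$-statement, handles the $L$-theoretic part.

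The main obstacle is the fiber identification in the second step: one must describe $\mathcal{B}$ and the twisting $\alpha$ explicitly and verify that the fiber really has the structure of a twisted group additive category of a finite group in the sense of Definition~\ref{groupcat}. This requires careful bookkeeping of how $G$-equivariance splits over $H$-orbits in $G/H$ and of the cocycle data arising when $H$ is not normal, and it is the point where finiteness of $[G:H]$ enters most essentially. A secondary technical issue is verifying that the untwisted vanishing $K_n(\mathcal{B}[I])=0$ for all finite $I$ genuinely follows from FJC for $H$ applied to an appropriate additive category with $H$-action built from $\mathcal{B}$ and $I$; this is expected from the strong inheritance properties of the fibered FJC summarised in \cite[Section~2.3]{BFL}, but must be made precise before Problem~\ref{q1} can be invoked.
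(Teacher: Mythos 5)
Your overall strategy is the right one and matches the paper's in spirit: encode FJC for the big group in the vanishing of the $K$-theory of an obstruction category, identify that obstruction category with a twisted group additive category $\mathcal{B}_\alpha[F]$ of a finite group $F$ over an auxiliary category $\mathcal{B}$ built from the subgroup's controlled data, check that $K_n(\mathcal{B}[I])=0$ for all finite $I$ using FJC for the subgroup, and then invoke Problem~\ref{q1}. However, the two steps you flag as ``obstacles'' are not technical bookkeeping; they are exactly where the proof lives, and as stated your plan for them does not close.

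First, the structural picture is off. The correct statement (Theorem~\ref{equivalence}, from \cite[Theorem 3.6]{Wang1}) is that for a \emph{semidirect product} $\Gamma=Q\rtimes_\alpha F$ the whole obstruction category $\mathcal O^{\Gamma}(E_{\mathcal{VC}}\Gamma;\mathcal A)$ is equivalent to $\mathcal O^{Q}(E_{\mathcal{VC}}\Gamma;\mathcal A)_\alpha[F]$; there is no Karoubi fiber sequence over $\mathcal O^H(\mathcal A|_H)$ whose fiber you need to compute, and trying to set one up for a general finite-index pair $H<G$ runs into the problem that a non-split extension $1\to H\to G\to F\to 1$ only gives cocycle-twisted data, which is not a twisted group additive category in the sense of Definition~\ref{groupcat} (that definition requires an honest right $F$-action). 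The missing idea is the reduction to the split case: pass to the normal core $N=\ker(G\to \mathrm{Aut}(G/H))$, use closure of FJC under subgroups and finite products to see that $Q=N^{|F|}$ satisfies FJC, and embed $G$ into the wreath product $Q\rtimes_\alpha F$ with $F=G/N$. Only then does the equivalence with a genuine $\mathcal{B}_\alpha[F]$ apply. Second, the verification that $K_n(\mathcal{B}[I])=0$ for \emph{all} finite $I$ with the \emph{trivial} action is not an automatic consequence of inheritance properties as you suggest: one needs that $E_{\mathcal{VC}}\Gamma$ is simultaneously a model for $E_{\mathcal{VC}}(Q\times I)$ (Lemma~\ref{classifying}), that $Q\times I$ satisfies FJC because $Q$ does and $I$ is finite, and that the resulting equivalence $\mathcal O^{Q\times I}(E_{\mathcal{VC}}\Gamma;\mathcal A)\simeq \mathcal O^{Q}(E_{\mathcal{VC}}\Gamma;\mathcal A)[I]$ carries the trivial $I$-action. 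Without these two ingredients made explicit, the hypothesis of Problem~\ref{q1} has not been established for the category to which you want to apply it, and the argument does not go through.
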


\begin{rem} In application to our treatment, we only need the additive category $\mathcal A$ in Problem \ref{q1} to be some special category, called the \textit{obstruction category}, that arises in the controlled algebra approach to the FJC. If we replace $\mathcal A$ by such a category in Problem \ref{q1}, then our proof shows that the statement in Theorem A is if and only if.
Because of this, a counterexample to Problem \ref{q1}, though does not give a counterexample to the FJC directly, is then some evidence that there might be some counterexample to the  conjecture. 
\end{rem}


Next we apply induction theory, i.e.  the theory of \textit{Mackey functors}, \textit{Green rings}  and  \textit{Green modules} developed by Swan\cite{SR}, Lam\cite{LT}, Green\cite{Gj} and Dress \cite{DA1}\cite{DA2} to study Problem \ref{q1}. 
Let us firstly recall some terminologies.  Let $p$ be a prime number.  
A finite group is called \textit{$p$-hyperelementary} if it is isomorphic to a group of the form $C\rtimes P$, a semi-direct product of a cyclic group $C$ of order prime to $p$ and a $p$-group $P$ .  If the above semi-direct product is a product, then the group is called \textit{$p$-elementary}.  A group is called \textit{elementary} or \textit{hyperelementary} if it is $p$-elementary or $p$-hyperelementary for some prime number $p$. 
We  denote by $\mathcal H, \mathcal H_p, \mathcal E, \mathcal E_p$  and $\mathcal {FC}$ the class of all hyperelementray, $p$-hyperelementray, elementary, $p$-elementary and finite cyclic groups. 

We also recall the notion of  \textit{Swan ring} $Sw(F)$ associated to a finite group $F$.  It is defined to be  the Grothendieck group of isomorphism classes of all finitely generated left $\mathbb Z[F]$-modules that are projective over $\mathbb Z$, with $[M]=[M']+[M'']$ whenever there is a short exact sequence $0\rightarrow M'\rightarrow M\rightarrow M''\rightarrow 0$ of such modules. Tensor product over $\mathbb Z$ makes $Sw(F)$ into a commutative ring with unit. Note that the unit is given by the class determined by $\mathbb Z$ with the trivial $F$-action.  The class in $Sw(F)$ represented by $\mathbb Z[F]$ is denoted by $[F]$. We denote by $Sw(F)_{[F]}$ the localization of $Sw(F)$ at $[F]$, i.e. we invert $[F]$ in $Sw(F)$.

In the Appendix, we will give a detailed proof of  an induction theorem (Theorem \ref{induction}) for the $K$- and $L$-theories of finite groups with twisted coefficients in additive categories. As an application of this theorem, we have the following:

\vskip 10pt
\begin{thmB} The following statements hold:
\begin{enumerate}
\item The $K$-theory part of Problem \ref{q1} has a positive answer if and only if it has a positive answer for every $F\in\mathcal H$.
\item The rational version, i.e. after tensoring with $\mathbb Q$ over $\mathbb Z$,  of the $K$-theory part of Problem \ref{q1} has a positive answer if and only if this version has a positive answer for every $F\in\mathcal {FC}$. 
\item Under the assumptions of the $K$-theory part of Problem \ref{q1}, the group $K_n(\mathcal A_\alpha[F]), \forall n\in\mathbb Z$,  is a module over the Swan ring $Sw(F)$,  and its localization at $[F]$ vanishes, i.e. 
$$K_n(\mathcal A_\alpha[F])\otimes_{Sw(F)}Sw(F)_{[F]}=0, \forall n\in\mathbb Z$$

The above statements hold for the $L$-theory part of Problem \ref{q1} if we replace $F\in\mathcal H$ by $F\in\mathcal H_2\cup\bigcup_{p\ \text{prime}, p\ne 2}\mathcal E_p$ in (1) and the \textit{Swan ring} Sw$(F)$ by the \textit{Dress ring} Dr$(F)$ in (3) \textup{(}for the definition of Dr$(F)$, see \cite[Page 293-294]{DA2}\textup{)}.

\end{enumerate}
\end{thmB}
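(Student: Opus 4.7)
The plan is to treat the assignment $H\mapsto K_n(\mathcal A_{\alpha|_H}[H])$ on subgroups $H\le F$ as a Green module over the Swan Green ring $H\mapsto Sw(H)$, and then feed it into the classical Swan--Lam--Artin--Dress induction machinery. The crucial input is Theorem \ref{induction} of the Appendix, which equips twisted $K$- and $L$-theory of additive categories with precisely this Mackey/Green structure. Granting it, all three parts of Theorem B and their $L$-theoretic analogues become short formal consequences of Frobenius reciprocity.

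For part (1), first observe that the hypothesis of Problem \ref{q1} is intrinsic to the category $\mathcal A$ and thus survives restriction of the action: for any $H\le F$, the restricted action $\alpha|_H$ acts on the same $\mathcal A$, so $K_n(\mathcal A[I])=0$ for every finite $I$ continues to hold. Assuming a positive answer for all $F\in\mathcal H$, we therefore get $M(H):=K_n(\mathcal A_{\alpha|_H}[H])=0$ for every hyperelementary subgroup $H\le F$. By the Swan--Lam theorem, the unit of $Sw(F)$ admits an expression $1=\sum_i \mathrm{ind}^F_{H_i}(x_i)$ with $H_i\in\mathcal H$ and $x_i\in Sw(H_i)$. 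Frobenius reciprocity, built into the Green-module axioms, then gives for any $m\in M(F)$,
\[
m \;=\; 1\cdot m \;=\; \sum_i \mathrm{ind}^F_{H_i}\bigl(x_i\cdot \mathrm{res}^F_{H_i}(m)\bigr) \;=\; 0,
\]
since each $\mathrm{res}^F_{H_i}(m)\in M(H_i)=0$. The reverse implication is trivial. Part (2) follows from the identical argument, using Artin's rational induction theorem to write $1\in Sw(F)\otimes\mathbb Q$ as a $\mathbb Q$-linear combination of inductions from finite cyclic subgroups, applied to the rationalized Green module $M(F)\otimes\mathbb Q$.

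Part (3) is the most direct application of Frobenius reciprocity. The class $[F]\in Sw(F)$ represented by $\mathbb Z[F]$ equals $\mathrm{ind}^F_{\{1\}}([\mathbb Z])$, where $[\mathbb Z]$ is the unit of $Sw(\{1\})=\mathbb Z$. Hence for every $m\in M(F)$,
\[
[F]\cdot m \;=\; \mathrm{ind}^F_{\{1\}}\bigl([\mathbb Z]\cdot \mathrm{res}^F_{\{1\}}(m)\bigr) \;=\; 0,
\]
because $\mathrm{res}^F_{\{1\}}(m)\in K_n(\mathcal A)=K_n(\mathcal A[\{1\}])=0$ by hypothesis. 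Thus $[F]$ annihilates $M(F)$, and a standard calculation with the tensor product relations then forces $M(F)\otimes_{Sw(F)}Sw(F)_{[F]}=0$. The $L$-theory versions run along the same lines, substituting Dress's induction theorem — which permits restriction to the class $\mathcal H_2\cup\bigcup_{p\ne 2}\mathcal E_p$ — and the Dress ring $\mathrm{Dr}(F)$ in place of the Swan ring.

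The real difficulty therefore lies not in Theorem B itself but in its prerequisite, Theorem \ref{induction} of the Appendix: showing that twisted $K$- and $L$-theory of additive categories carries a Green-module structure over the Swan (resp.\ Dress) Green ring, with induction, restriction, and Frobenius reciprocity all intact in this generality. Once that framework is in place, the arguments above are essentially immediate.
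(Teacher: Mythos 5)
Your proposal is correct and takes essentially the same approach as the paper: part (3) is exactly the paper's argument (Frobenius reciprocity with $[F]=Ind_{\{1\}}^F(1)$ and $Res^F_{\{1\}}(y)\in K_n(\mathcal A)=0$, so $[F]$ annihilates the module and the localization vanishes), and parts (1)--(2) rest on the same Green-module-over-$Sw(-)$ (resp.\ $Dr(-)$) structure established in the Appendix together with the classical Swan/Artin/Dress induction theorems. The only cosmetic difference is that you re-derive the vanishing directly from the unit decomposition $1=\sum_i Ind^F_{H_i}(x_i)$ plus Frobenius reciprocity, whereas the paper simply quotes the (co)limit isomorphisms of Theorem \ref{induction}, which encode the same induction machinery.
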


\vskip 10pt
The corresponding statements as in Theorem B for the FJC take the following forms:  

\vskip 10pt
\begin{thmC} The following statements hold:
\begin{enumerate}
\item The $K$-theoretic FJC is closed under passage to over-groups of finite indices  if and only if the following holds: for every group $\Gamma$ of the form $\Gamma=G\rtimes_\alpha P$, the semi-direct product of a group $G$ with a hyperelementary group $P$, if $G$ satisfies the conjecture, then $\Gamma$ also satisfies the conjecture.
\item The rational version of the $K$-theoretic FJC is  closed under passage to over-groups of finite indices if and only if the following holds: for every group $\Gamma$ of the form $\Gamma=G\rtimes_\alpha C$, the semi-direct product of a group $G$ with a finite cyclic group $C$, if $G$ satisfies the rational FJC, then $\Gamma$ also satisfies the rational FJC.

\item Let $\Gamma=G\rtimes_\alpha P$ be as in (1) and $\mathcal A$ be an additive category with a right $\Gamma$ action via $\beta$.   Then the groups $H^\Gamma_*(E_{\mathcal{VC}}\Gamma; \textbf{\textup{K}}_{\mathcal A})$, $K_*(\mathcal A_{\beta}[\Gamma])$ as in the assembly map \ref{k assembly},  are modules over the Swan ring $Sw(P)$ and if the $K$-theoretic FJC holds for $G$, then the \textit{localized} assembly map
$$H^\Gamma_*(E_{\mathcal{VC}}\Gamma; \textbf{\textup{K}}_{\mathcal A})\otimes_{Sw(P)}Sw(P)_{([P])}\rightarrow K_n(\mathcal A_\beta[\Gamma])\otimes_{Sw(P)}Sw(P)_{([P])}$$
is an isomorphism.

The above statements hold for the $L$-theoretic FJC if we replace $P\in\mathcal H$ by $P\in\mathcal H_2\cup\bigcup_{p\ \text{prime}, p\ne 2}\mathcal E_p$ in (1) and (3), and the \textit{Swan ring} Sw$(P)$ by the \textit{Dress ring} Dr$(P)$ in (3). 

\end{enumerate}
\end{thmC}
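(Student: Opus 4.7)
The plan is to derive Theorem C directly from Theorems A and B by exploiting the controlled-algebra dictionary between validity of the FJC and vanishing of $K$-theory (respectively $L$-theory) of certain obstruction categories. Theorem B converts reductions for Problem \ref{q1} into reductions for the FJC itself, via Theorem A, which is an ``if and only if'' for obstruction categories by the remark following Theorem A.

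For part (1), the forward implication is immediate: hyperelementary groups are finite, so in $\Gamma=G\rtimes_\alpha P$ the subgroup $G$ has finite index, and closure under passage to over-groups of finite indices, applied to this inclusion together with the FJC for $G$, forces the FJC for $\Gamma$. For the reverse implication, I would start from an arbitrary finite-index inclusion $H<G$ with $H$ satisfying the $K$-theoretic FJC, invoke Theorem A to reduce the FJC for $G$ to a positive answer to Problem \ref{q1} for the obstruction category $\mathcal A$ of the pair $(H,G)$, and then apply Theorem B(1) to further reduce to the case $F\in\mathcal H$. The remaining step is to translate the vanishing of $K_n(\mathcal A_\alpha[F])$ into the FJC for a concrete semi-direct product of the form $G_0\rtimes_\beta F$, where $G_0$ inherits the FJC from $H$; this is precisely the hypothesis of (1). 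Part (2) follows by the same scheme after tensoring with $\mathbb Q$ and using Theorem B(2) to reduce to $F\in\mathcal{FC}$.

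For part (3), I would use the controlled-algebra identification of the fiber (or cokernel) of the assembly map for $\Gamma$ with the $K$-theory of an obstruction category $\mathcal A'$, which under the FJC for $G$ becomes precisely the type of category handled by Problem \ref{q1}. Theorem B(3) then equips $K_n(\mathcal A'_\alpha[P])$ with an $Sw(P)$-module structure that vanishes after localization at $[P]$. This module structure transfers to both source and target of the assembly map via the induced action of $P$ on the relevant controlled categories, yielding the stated localized isomorphism.

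The main technical obstacle is the controlled-algebra translation invoked in the reverse direction of (1) and in (3): namely, identifying the twisted group additive category $\mathcal A_\alpha[F]$ with the obstruction category of an explicit semi-direct product, and checking that the right action of $F$ on $\mathcal A$ exactly corresponds to the extension data $\alpha$. The $L$-theoretic statements go through in parallel, substituting $\mathcal H$ by $\mathcal H_2\cup\bigcup_{p\ \text{prime},\, p\ne 2}\mathcal E_p$ in (1) and the Swan ring by the Dress ring in (3), as prescribed by the corresponding parts of Theorem B.
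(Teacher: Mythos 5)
Your overall skeleton (wreath-product reduction, obstruction-category dictionary, hyperelementary induction, and the Swan-module argument for (3)) is the same as the paper's, but the framing ``derive Theorem C directly from Theorems A and B'' has a genuine logical problem in the reverse direction of (1). Theorem A is a one-way implication whose hypothesis is a positive answer to Problem \ref{q1}, and that hypothesis is never available here: the hypothesis of C(1) only gives you control of the twisted categories $\mathcal O^{Q}(E_{\mathcal{VC}}\Gamma;\mathcal A)_\alpha[P]$ for the \emph{specific} action $\alpha$ induced by a semidirect-product decomposition $Q\rtimes_\alpha P$, whereas Problem \ref{q1} (even restricted to obstruction categories, as in the remark after Theorem A) quantifies over \emph{all} actions of all finite --- or, after Theorem B(1), all hyperelementary --- groups on the category. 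An arbitrary action of a hyperelementary $P$ on the obstruction category need not come from any group extension, so the hypothesis of C(1) says nothing about it; consequently neither Theorem A nor Theorem B(1) can be invoked as a black box.

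The repair is what the paper actually does, and it is close to what you gesture at in your ``remaining step'': rerun the first paragraph of the proof of Theorem A to replace $G$ by $\Gamma_0=Q\rtimes_\alpha F$ with $Q$ satisfying the FJC and $F$ finite; use Theorem \ref{equivalence} to identify $\mathcal O^{\Gamma_0}(E_{\mathcal{VC}}\Gamma_0;\mathcal A)$ with $\mathcal O^{Q}(E_{\mathcal{VC}}\Gamma_0;\mathcal A)_\alpha[F]$; then apply the induction theorem of the Appendix (Theorem \ref{induction}) \emph{to this fixed category and this fixed action} to reduce to the vanishing of $K_n\big(\mathcal O^{Q}(E_{\mathcal{VC}}\Gamma_0;\mathcal A)_\alpha[P]\big)$ for hyperelementary $P\le F$, and identify each such piece (again by Theorem \ref{equivalence}, together with Lemma \ref{classifying}) with $\mathcal O^{Q\rtimes_\alpha P}(E_{\mathcal{VC}}\Gamma_0;\mathcal A)$, whose $K$-theory vanishes by the hypothesis of (1) and Theorem \ref{vanishing}. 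Note also that the identification you list as the ``main technical obstacle'' is not an obstacle at all: it is exactly the already quoted Theorem \ref{equivalence} from \cite{Wang1}. Your sketches of (2) and of the Swan/Dress localization argument for (3) (vanishing of the localized obstruction $K$-theory under the FJC for $G$, transported to the assembly map through the $Sw(P)$-module structures on source and target) do agree with the paper's proof.
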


\vskip 10pt
The organization of the paper is simple. In Section \ref{conalg}, we very briefly recall   the formulation of the FJC and an important fact from the controlled algebra approach to the FJC. In Section \ref{main}, we prove our main Theorems. The main ingredients involved in the proofs are \cite[Theorem 3.6]{Wang1} and the induction theorem that is proved in the Appendix. 

\vskip 10pt
\noindent
\textbf{Acknowledgement.}
The author would like to thank his thesis advisor Professor Jean-Fran\c{c}ois Lafont for his enormous and continuous support throughout the years. Part of the project was initiated when the author was a graduate student at the Ohio State University under the guidance of Jean. The author would also like to thank Professor Guoliang Yu at the Texas A\&M university and Shanghai Center for Mathematical Science. The author learned a lot through numerous discussions with Guoliang. The author also thanks Shanghai Center for Mathematical Science for its hospitality during the author's visit in Summer 2015.   Part of the project was done during this visit.



\section{Formulation of the conjecture and the obstruction category}\label{conalg}

In this section, we very briefly recall the formulation of the FJC and the controlled algebra approach to it. More details can be found in \cite{fj},\cite{DL},\cite{BFJR},\cite{BR}.


We first recall the notion of  \textit{group additive category} that is already mentioned in the introduction. 
Let $\mathcal A$ be a right $G$-additive category (we always assume  $\mathcal A$ comes with an involution, which is compatible with the right $G$-action, when we talk about $L$-theory). For every left $G$-set $X$, Bartels and Reich \cite{BR} defined a new additive category $\mathcal A*_GX$. The special case when $X=pt$ will be important
in our treatment and we denote it by $\mathcal A_{\alpha}[G]$, where $\alpha$ denotes the right $G$-action on $\mathcal A$. 

\begin{defn}\label{groupcat}(\cite[Definition 2.1]{BR}) Objects of the category $\mathcal A_{\alpha}[G]$ are the same as the objects of $\mathcal A$. A morphism $\phi: A\rightarrow B$ from $A$ to $B$ in $\mathcal A _\alpha[G]$
is a formal sum $\phi=\sum_{g\in G}\phi^g\cdot g$, where $\phi^g: A\rightarrow g^*B$ is a morphism in $\mathcal A$ and there are only finitely many $g\in G$ with $\phi^g\not=0$. Addition of morphisms is defined in
the obvious way.  Composition of morphism is defined as follows: let $\phi=\sum_{k\in G}\phi^k\cdot k: A\rightarrow B$ be a morphism from $A$ to $B$ and $\psi=\sum_{h\in G}\psi^h\cdot h: B\rightarrow C$ be a morphism
from $B$ to $C$, their composition is given by 
$$\psi\circ\phi:=\sum_{g\in G}\big(\sum_{k,h\in G, g=hk}k^{*}(\psi^h)\circ \phi^k \big)\cdot g$$
\end{defn}

\vskip 10pt

Now let $H^G_*(-; \textbf{K}_{\mathcal A})$ and $H^G_*(-;\textbf{L}^{<-\infty>}_{\mathcal A})$ be the two $G$-equivariant homology theories constructed by Bartels and Reich in \cite{BR}, using the method of Davis and L\"uck \cite{DL}. These two equivariant homology theories have the property that for every subgroup $H<G$, $H^G_n(G/H; \textbf{K}_{\mathcal A})=K_n(\mathcal A_{\alpha}[H]),  H^G_n(G/H;\textbf{L}^{<-\infty>}_{\mathcal A})=L_n^{<-\infty>}(\mathcal A_\alpha[H]), \forall n\in\mathbb Z$. In particular, $H^G_n(pt; \textbf{K}_{\mathcal A})=K_n(\mathcal A_{\alpha}[G])$ and $H^G_n(pt;\textbf{L}^{<-\infty>}_{\mathcal A})=L_n^{<-\infty>}(\mathcal A_\alpha[G])$.

For $\mathcal F$,   a family of subgroups of $G$,  which is closed under taking subgroups and conjugations,  denote by $E_{\mathcal F}G$ a model for the \textit{classifying space} of $G$ relative to the family $\mathcal F$. It is a $G$-CW complex and is characterized, up to
$G$-equivariant homotopy equivalence, by the properties that every isotropy group of the action lies in the family $\mathcal F$, and the fixed point set of $H$ is contractible if $H\in\mathcal F$ and empty if $H\notin\mathcal F$. 
Examples of classifying spaces are $EG, E_{\mathcal {FIN}}G$, and $E_{\mathcal VC}G$, corresponding to the families of trivial, finite, and virtually cyclic  subgroups  of $G$ respectively.

\vskip 10pt
\noindent
\textit{The Farrell-Jones Conjecture.}\label{FJC}
 Let $G$ be a group and $\mathcal A$ be an additive category with a right $G$-action. We say the group $G$ satisfies the $K$-theoretic FJC with coefficient in $\mathcal A$ if the $K$-theoretic assembly map 
\begin{align}\label{k assembly}
& A{_{K}^{\mathcal A}}: H^G_n(E_\mathcal{VC}G; \textbf{K}_{\mathcal A})\rightarrow H^G_n(pt; \textbf K_{\mathcal A})=K_n(\mathcal A_\alpha[G])
\end{align}
which is induced by the obvious map $E_\mathcal{VC}G\rightarrow pt$, is an isomorphism for all $n\in\mathbb Z$. We say the group $G$ satisfies the $L$-theoretic FJC with coefficient in $\mathcal A$ if  the 
corresponding $L$-theoretic assembly map
\begin{align}\label{l assembly}
& A_L^{\mathcal A}: H^G_n(E_\mathcal{VC}G; \textbf{L}^{<-\infty>}_{\mathcal A})\rightarrow H^G_n(pt; \textbf L^{<-\infty>}_{\mathcal A})=L^{<-\infty>}_n(\mathcal A_\alpha[G])
\end{align}
is an isomorphism for all $n\in\mathbb Z$.

\vskip 10pt
We now recall the following important fact from the controlled algebra approach to the FJC. 

\begin{thm} \label{vanishing}\textup{(}\cite{BLR2}\cite{BL2}\textup{)} Let $G$ be a group and $\mathcal A$ be a right $G$-additive category. There exists an additive category $\mathcal O^G(E_\mathcal{VC}G; \mathcal A)$, so that the $K$-theoretic FJC holds for $G$ with coefficient in $\mathcal A$  if and only if the $K$-theory of  $\mathcal O^{G}(E_{\mathcal{VC}}G; \mathcal A)$ vanishes,  i.e.  $K_n(\mathcal O^{G}(E_{\mathcal{VC}}G; \mathcal A))=0, \forall n\in\mathbb Z$. The corresponding  statement holds for the $L$-theoretic FJC.
\end{thm}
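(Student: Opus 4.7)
The plan is to construct the obstruction category via controlled algebra and to produce a Karoubi filtration whose associated fibration sequence of non-connective $K$-theory spectra identifies $K(\mathcal{O}^G(E_{\mathcal{VC}}G;\mathcal{A}))$ with the cofiber of the assembly map $A^{\mathcal A}_K$. First I would form an auxiliary $G$-equivariant controlled category $\mathcal{C}^G(E_{\mathcal{VC}}G\times [1,\infty);\mathcal{A})$ whose objects are $G$-invariant locally finite collections of objects of $\mathcal A$ indexed by points of $E_{\mathcal{VC}}G\times[1,\infty)$, and whose morphisms are required to be $G$-equivariantly continuously controlled at $E_{\mathcal{VC}}G\times\{\infty\}$ and metrically bounded in the $[1,\infty)$-direction. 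Inside $\mathcal{C}^G$ I would single out the full subcategory $\mathcal{T}^G$ of objects whose support is $G$-compact in the $E_{\mathcal{VC}}G$-coordinate and bounded in the $[1,\infty)$-coordinate; this is a Karoubi filtration in the sense of Pedersen--Weibel / Cardenas--Pedersen, and $\mathcal{O}^G(E_{\mathcal{VC}}G;\mathcal{A})$ is then defined as the Karoubi quotient.

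The second step is to extract the K-theoretic consequences of this filtration. The Karoubi sequence
\[
\mathcal{T}^G \longrightarrow \mathcal{C}^G \longrightarrow \mathcal{O}^G
\]
induces a homotopy fibration of non-connective $K$-theory spectra
\[
\mathbf{K}(\mathcal{T}^G)\longrightarrow \mathbf{K}(\mathcal{C}^G)\longrightarrow \mathbf{K}(\mathcal{O}^G).
\]
The middle term is contractible: the ray $[1,\infty)$ supports an Eilenberg swindle (translation to infinity) that is compatible with the equivariant continuous control condition, so $\mathbf{K}(\mathcal{C}^G)\simeq *$. Consequently the connecting map gives an equivalence $\mathbf{K}(\mathcal{O}^G)\simeq \Sigma\,\mathbf{K}(\mathcal{T}^G)$, and it remains to identify the latter, together with its boundary map, in terms of the assembly map.

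The third step, and the main technical core, is to identify $\mathbf{K}(\mathcal{T}^G)$ with the source $H^G_*(E_{\mathcal{VC}}G;\mathbf{K}_{\mathcal{A}})$ of the assembly map and to show that the boundary map in the Karoubi fibration agrees with $A^{\mathcal A}_K$ after this identification. For this one uses that the functor $X\mapsto \mathbf{K}(\mathcal{T}^G(X\times[1,\infty);\mathcal{A}))$ is a $G$-equivariant homology theory on $G$-CW complexes (continuity, excision via Mayer--Vietoris for $G$-pushouts, $G$-homotopy invariance) whose value on an orbit $G/H$ recovers $\mathbf{K}(\mathcal{A}_\alpha[H])$ by a cone/swindle argument identifying the germs at infinity with the group category. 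By the uniqueness theorem of Davis--L\"uck for equivariant homology theories with prescribed coefficients on orbits, this functor is naturally equivalent to $H^G_*(-;\mathbf{K}_{\mathcal{A}})$, and the natural transformation induced by $E_{\mathcal{VC}}G\to\mathrm{pt}$ matches the connecting map of the fibration, up to the canonical degree shift.

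The same construction, with the non-connective $\mathbf{L}^{\langle -\infty\rangle}$-spectrum replacing $\mathbf{K}$ (and using that $\mathcal A$ carries an involution compatible with the $G$-action), yields the $L$-theoretic version. The hardest step is the third one, since verifying that the controlled/Karoubi boundary coincides with the Davis--L\"uck assembly map on the nose requires matching two rather different constructions; the cleanest route is to invoke the uniqueness theorem for equivariant homology theories and the identification of the local coefficient systems, as carried out in \cite{BLR2} and \cite{BL2}, rather than constructing an explicit chain-level equivalence.
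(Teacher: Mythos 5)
The paper gives no proof of this theorem---it is quoted directly from \cite{BLR2} and \cite{BL2}---and your outline (Karoubi filtration of the continuously controlled category over $E_{\mathcal{VC}}G\times[1,\infty)$, Eilenberg swindle killing the middle term, identification of the sub-term with the equivariant homology theory via Davis--L\"uck and of the boundary map with the assembly map) is exactly the standard argument of those references. This is the same approach, correctly sketched; no gap to report.
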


\begin{rem} Because of the above theorem, the category $\mathcal O^G(E_{\mathcal {VC}}G; \mathcal A)$ is usually referred to as the \textit{obstruction category}.  It consists of certain \textit{geometric modules} over the space $G\times E_{\mathcal{VC}}G\times[1, \infty)$.  More details about geometric modules and the construction of this category can be found in the references mentioned above.
\end{rem}

\begin{rem} In this paper, when we say a group $G$ satisfies the FJC, we always mean it satisfies the conjecture with coefficients in all additive categories. Otherwise, we will specify the particular coefficient. 
\end{rem}

We will also need the following result from \cite{Wang1}:

\begin{thm}\label{equivalence}\textup{(}\cite[Lemma 3.2, Theorem 3.6]{Wang1}\textup{)} Let $\Gamma=G\rtimes_\alpha F$, where $F$ is a finite group, and  $\mathcal A$ be an additive category with a right $\Gamma$-action. View $\mathcal A$ as a right $G$- and $F$-additive category, and $E_{\mathcal VC}\Gamma$ as a $G$- and $F$-CW complex in the natural way. 
Then there is a right $F$-action on $\mathcal O^G(E_{\mathcal VC}\Gamma; \mathcal A)$, which we still denote by $\alpha$, with the property that the two   categories $\mathcal O^{\Gamma}(E_{\mathcal VC}\Gamma; \mathcal A)$, $\mathcal O^G(E_{\mathcal VC}\Gamma; \mathcal A)_\alpha[F]$ are equivalent as additive categories. 
\end{thm}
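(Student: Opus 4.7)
The plan is to construct an explicit additive functor
$$\Phi: \mathcal{O}^G(E_{\mathcal{VC}}\Gamma; \mathcal{A})_\alpha[F] \longrightarrow \mathcal{O}^\Gamma(E_{\mathcal{VC}}\Gamma; \mathcal{A})$$
which is the identity on objects, and to show that it is an isomorphism of additive categories. The required right $F$-action on $\mathcal{O}^G(E_{\mathcal{VC}}\Gamma; \mathcal{A})$ is the one assembled from the restriction to $F$ of the right $\Gamma$-action on $\mathcal{A}$, the restriction to $F$ of the left $\Gamma$-action on $E_{\mathcal{VC}}\Gamma$, and the conjugation action of $F$ on $G$ provided by the semidirect-product structure $\Gamma = G \rtimes_\alpha F$. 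Since $F$ permutes $G$-orbits on the control space $G \times E_{\mathcal{VC}}\Gamma \times [1,\infty)$ and acts compatibly on $\mathcal{A}$, this yields a well-defined $F$-action on $\mathcal{O}^G(E_{\mathcal{VC}}\Gamma; \mathcal{A})$ preserving the additive structure.

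The key algebraic fact driving the equivalence is that every element of $\Gamma$ has a unique factorization $\gamma = gf$ with $g \in G$ and $f \in F$. On morphisms, I define $\Phi$ by sending a formal sum $\varphi = \sum_{f \in F} \varphi^f \cdot f$, where each $\varphi^f: A \to f^*B$ lies in $\mathcal{O}^G(E_{\mathcal{VC}}\Gamma; \mathcal{A})$, to the morphism in $\mathcal{O}^\Gamma(E_{\mathcal{VC}}\Gamma; \mathcal{A})$ whose matrix entries in the block indexed by $\gamma = gf$ are prescribed by the $g$-block entries of $\varphi^f$ together with the twist coming from $f$. Conversely, any $\Gamma$-equivariant morphism of geometric modules can be split according to the coset decomposition $\Gamma/G \cong F$: one restricts to a fundamental domain for $G$ and, for each $f \in F$, reads off the $G$-equivariant piece $\varphi^f$ that accounts for the transitions into the $f$-th coset. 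Uniqueness of the factorization $\gamma = gf$ guarantees that this decomposition is itself unique, so $\Phi$ is a bijection on morphism sets.

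It then remains to verify three compatibilities: that $\Phi$ respects composition, that it respects addition, and that the control conditions on the two sides match. Additivity is immediate from the definition. For composition, one expands the Bartels-Reich twisted composition formula from Definition \ref{groupcat},
$$\psi \circ \varphi = \sum_{g}\Big(\sum_{hk = g} k^*(\psi^h) \circ \varphi^k\Big)\cdot g,$$
and compares it term-by-term with the composition in $\mathcal{O}^\Gamma(E_{\mathcal{VC}}\Gamma; \mathcal{A})$, using the semidirect-product multiplication law $(g_1 f_1)(g_2 f_2) = g_1 \alpha_{f_1}(g_2) \cdot f_1 f_2$ in $\Gamma$; the $\alpha$-twist appearing in the multiplication will match exactly the pullback $k^*$ appearing in the twisted-group composition. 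For the control conditions, both categories impose their metric and support bounds on the same underlying space $G \times E_{\mathcal{VC}}\Gamma \times [1,\infty)$, so a $\Gamma$-equivariant morphism is $\mathcal{O}^\Gamma$-controlled if and only if each of its $G$-equivariant pieces $\varphi^f$ is $\mathcal{O}^G$-controlled; this follows because $F$ is finite and translating a controlled subset by one of finitely many elements of $F$ keeps it controlled.

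The main obstacle will be the composition compatibility, where one must carefully align the $\alpha$-twist in the semidirect-product multiplication on $\Gamma$ with the pullback $f^*$ appearing in the twisted group additive category; this is the step where the choice of conjugation action of $F$ on $G$, rather than some other action, is forced by the formulas. A secondary bookkeeping issue is confirming that the specific form of the control conditions defining the obstruction category (in particular the interplay with the $[1,\infty)$ coordinate) is preserved by the $F$-equivariant repackaging, but since $F$ acts by isometries through a finite group, this amounts to unwinding definitions once one has set up the decomposition correctly.
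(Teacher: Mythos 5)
The paper does not actually prove Theorem \ref{equivalence}: it is imported verbatim from \cite[Lemma 3.2, Theorem 3.6]{Wang1}, and your outline follows the same strategy used there --- the $F$-action on $\mathcal O^G(E_{\mathcal{VC}}\Gamma;\mathcal A)$ assembled from conjugation on $G$ together with the restricted actions on $\mathcal A$ and on $E_{\mathcal{VC}}\Gamma$, and the decomposition of a $\Gamma$-equivariant morphism into $G$-equivariant pieces indexed by the cosets $Gf$, with the semidirect-product twist matching the pullback in Definition \ref{groupcat}. The one imprecision worth flagging is that the two obstruction categories live over different control spaces, $\Gamma\times E_{\mathcal{VC}}\Gamma\times[1,\infty)$ versus $G\times E_{\mathcal{VC}}\Gamma\times[1,\infty)$, so your functor cannot literally be the identity on objects; rather one restricts a $\Gamma$-equivariant module to the $G$-invariant subspace $G\times E_{\mathcal{VC}}\Gamma\times[1,\infty)$ and induces back, checking (as you do for morphisms, using finiteness of $F$) that the support and control conditions correspond --- which is exactly why the statement asserts an equivalence of additive categories rather than an isomorphism.
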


\section{Proof of the Main theorems}\label{main}
In this section, we prove our main theorems.  We first need a simple lemma.  

\subsection{A simple Lemma} The following simple lemma is surely well-known to experts.

\begin{lem}\label{classifying} The following statements hold:
\begin{enumerate}
\item Let $H<G$ be a subgroup. Then every model $E_{\mathcal VC}G$ for the classifying space of $G$
relative to the family $\mathcal{VC}$,  viewed as an $H$-CW complex via the inclusion of $H$ in $G$,  is also a model for the classifying space of $H$ relative to $\mathcal {VC}$.

\item Let $G=H\times F$,  where $F$ is a finite group. Then every model $E_{\mathcal{VC}}H$  for the classifying space of $H$ relative to $\mathcal {VC}$, viewed as a $G$-CW complex via the projection of $G$ onto $H$, is also a model for the classifying space of $G$ relative to the family $\mathcal{VC}$.\end{enumerate}
\end{lem}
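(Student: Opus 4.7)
My plan is to verify directly the two characterizing properties of $E_{\mathcal{VC}}(-)$ recalled in the text, namely that every isotropy group lies in the family and that for each subgroup $K$ the $K$-fixed point set is contractible if $K\in\mathcal{VC}$ and empty otherwise. In each part I would simply transport these two conditions across the relevant change-of-group, making free use of the fact that $\mathcal{VC}$ is closed under subgroups, quotients, and finite extensions.

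For part (1), after restricting the $G$-action on $E_{\mathcal{VC}}G$ to $H$, the $H$-isotropy at any point $x$ is $G_x\cap H$, a subgroup of the virtually cyclic group $G_x$ and hence itself virtually cyclic. Moreover for every subgroup $K<H$, which is simultaneously a subgroup of $G$, the $H$-fixed point set $(E_{\mathcal{VC}}G)^K$ literally coincides with the $G$-fixed point set, so it is contractible when $K\in\mathcal{VC}$ and empty when $K\notin\mathcal{VC}$. This part should be essentially immediate from the definition.

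For part (2), I would push everything through the projection $p\colon G=H\times F\to H$. The $G$-stabilizer at a point $x\in E_{\mathcal{VC}}H$ is $H_x\times F$, a finite extension of the virtually cyclic group $H_x$, so it is itself virtually cyclic. For any subgroup $K<G$, the action of $K$ on $E_{\mathcal{VC}}H$ factors through $p(K)$, so
\[
(E_{\mathcal{VC}}H)^K=(E_{\mathcal{VC}}H)^{p(K)},
\]
and this is contractible or empty according to whether $p(K)$ lies in $\mathcal{VC}$ as a subgroup of $H$. If $K\in\mathcal{VC}$ then so is its quotient $p(K)$, giving contractibility. Conversely, from $K\subseteq p(K)\times F$ one sees that $p(K)\in\mathcal{VC}$ forces $K\in\mathcal{VC}$, so $K\notin\mathcal{VC}$ implies $p(K)\notin\mathcal{VC}$ and the fixed set is empty.

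I do not expect a real obstacle here; the only point that requires a moment's thought is the last implication in part (2), where one must remember that a virtually cyclic $p(K)$ together with the finite group $F$ yields a virtually cyclic envelope $p(K)\times F$ of $K$, so that the contrapositive $K\notin\mathcal{VC}\Rightarrow p(K)\notin\mathcal{VC}$ really does hold and the ``empty'' half of the characterization is satisfied.
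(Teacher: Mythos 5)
Your proof is correct and follows essentially the same route as the paper: part (1) by direct inspection of isotropy groups and fixed sets, and part (2) by pushing through the projection, using that $\mathcal{VC}$ is closed under subgroups, quotients, and finite extensions, with the identity $(E_{\mathcal{VC}}H)^K=(E_{\mathcal{VC}}H)^{p(K)}$ handling both the contractible and empty cases.
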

\begin{proof} (1) is immediate by the characterization properties of a classifying space. Let us prove (2).  Let $\pi: G\rightarrow H$ be the projection of $G$ onto $H$.  Every isotropy group of the action of $G$ on $E_{\mathcal{VC}}H$ has the form $V\times F$, where $V<H$ is an isotropy group of the action of $H$
on $E_{\mathcal{VC}}H$.  Therefore $V$ is virtually cyclic and hence $V\times F$ is virtually cyclic since $F$ is finite. This proves every isotropy group of the $G$-action on $E_{\mathcal{VC}}H$ is virtually cyclic.   Now let $V<G$ be virtually cyclic, we have to show its fixed point set $(E_{\mathcal{VC}}H)^V$ is contractible. But this follows easily from the fact that $(E_{\mathcal{VC}}H)^V=(E_{\mathcal{VC}}H)^{\pi(V)}$.
Now if $V<G$ is not virtually cyclic, then since $F$ is finite, $\pi(V)<H$ is also not virtually cyclic. Therefore $(E_{\mathcal{VC}}H)^V=(E_{\mathcal{VC}}H)^{\pi(V)}=\emptyset$. This completes the proof.
\end{proof}

\noindent
\subsection{Proof of Theorem A} We consider the $K$-theoretic FJC only, the $L$-theoretic case is completely parallel.   Let $H<G$ be a subgroup of finite index and suppose $H$ satisfies the FJC with coefficient in every additive category.  Consider the translation action of $G$ on $G/H$. It induces a group homomorphism $G\rightarrow Aut(G/H)$. Let us denote its kernel by $N$.  Then $N$ is a normal subgroup of $G$ and $N<H$. Note $[G:N]$ is finite since $Aut(G/H)$ is finite. By assumption,   $H$ satisfies the FJC, hence $N$ also satisfies the FJC, since the FJC with coefficients  is closed under taking subgroups.  Now let $F=G/N$ and consider the wreath product
$N\wr F=N^{|F|}\rtimes_{\alpha} F$, where $F$ acts on $N^{|F|}$ by translating the components. Since $N$ satisfies the conjecture, it follows that $N^{|F|}$ also satisfies the conjecture,  since the conjecture is closed under taking finite direct sum.  Now since $G$ injects into $N^{|F|}\rtimes_{\alpha} F$, see for example \cite[Section 2]{FR}, in order to show $G$ satisfies the FJC, it suffices to show $N^{|F|}\rtimes_{\alpha} F$
satisfies the FJC. 

Now let  $Q=N^{|F|}$, $\Gamma= Q\rtimes_\alpha F$ and $\mathcal A$ be any additive category with a right  $\Gamma$-action.  By the first part of Lemma \ref{classifying},  $E_{\mathcal {VC}}\Gamma$ is also a model for $E_{\mathcal{VC}}Q$.  Therefore by the second part of Lemma \ref{classifying}, $E_{\mathcal{VC}}\Gamma$ is also a model for $E_{\mathcal{VC}}(Q\times I)$ for every finite group $I$, where $Q\times I$ acts on $E_{\mathcal{VC}}\Gamma$ via the projection of $Q\times I$ to $Q$. In particular, $I$ acts on $E_{\mathcal{VC}}\Gamma$ trivially. Endow $\mathcal A$ with the action of $Q\times I$ via the projection of $Q\times I$ to $Q$ as well. Then according to the defining formulas in \cite[Lemma 3.2]{Wang1}, the trivial actions of $I$ on $Q$, $E_{\mathcal VC}\Gamma$ and $\mathcal A$ induce the trivial action of $I$ on the obstruction category $\mathcal O^Q(E_{\mathcal{VC}}\Gamma; \mathcal A)$ of $Q$. Therefore, by Theorem \ref{equivalence}, the obstruction category $\mathcal O^{Q\times I}(E_{\mathcal{VC}}\Gamma; \mathcal A)$ of $Q\times I$  is equivalent to the group additive category $\mathcal O^Q(E_{\mathcal{VC}}\Gamma; \mathcal A)[I]$ of the obstruction category $\mathcal O^Q(E_{\mathcal{VC}}\Gamma; \mathcal A)$ of $Q$ over $I$ with the trivial action. 

Now  since $Q$ satisfies the FJC and $I$ is finite, $Q\times I$ also satisfies the FJC. This implies, by Theorem \ref{vanishing},  the $K$-theory of $\mathcal O^{Q\times I}(E_{\mathcal{VC}}\Gamma; \mathcal A)$ vanishes. Therefore, the $K$-theory of  $\mathcal O^Q(E_{\mathcal{VC}}\Gamma; \mathcal A)[I]$ also vanishes for every finite group $I$.  Now assume Problem \ref{q1} has a positive answer. Then for every finite group $J$ and every right action $\beta$ of $J$ on $\mathcal O^Q(E_{\mathcal{VC}}\Gamma; \mathcal A)$, the $K$-theory of the twisted group additive category $\mathcal O^Q(E_{\mathcal{VC}}\Gamma; \mathcal A)_\beta[J]$ also vanishes.  Now according to Theorem \ref{equivalence}, the obstruction category 
$\mathcal O^\Gamma(E_{\mathcal{VC}}\Gamma; \mathcal A)$ of $\Gamma=Q\rtimes_\alpha F$ is equivalent to the twisted group additive category $\mathcal O^Q(E_{\mathcal{VC}}\Gamma; \mathcal A)_\alpha[F]$. By the above argument, the $K$-theory of $\mathcal O^Q(E_{\mathcal{VC}}\Gamma; \mathcal A)_\alpha[F]$ vanishes, therefore the $K$-theory of $\mathcal O^\Gamma(E_{\mathcal{VC}}\Gamma; \mathcal A)$ also vanishes. This implies the $K$-theoretic FJC holds for $\Gamma$. Therefore the $K$-theoretic FJC holds for $G$. This completes the proof of Theorem A. 

\subsection{Proof of Theorem B} Parts (1) and (2) follow directly from the induction theorem,  see Theorem \ref{induction} of the Appendix. Part (3) follows from the fact that $K_n(\mathcal A_\alpha[-])$ is a \textit{Green module} over the \textit{Green ring} $Sw(-)$, see the proof of Theorem \ref{induction} for these notions. More precisely, for every subgroup $I<F$, the group $K_n(\mathcal A_\alpha[I])$ is a module over the Swan ring $Sw(I)$,  and we also have the Frobenius reciprocity law \ref{frobenius2}:
$$ Ind_I^J(x\cdot Res^J_I(y))=Ind_I^J(x)\cdot y,\ \text{for any}\ x\in Sw(I), y\in K_n(\mathcal A_\alpha[J])$$
where $I<J$ are subgroups of $F$, and 
$Ind_I^J, Res_I^J$ are the induction and restriction maps on the corresponding $K$-groups or the Swan rings.

Now in the above identity, we take $I$ to be the trivial group $\{1\}$,  $J$ to be $F$ and $x\in Sw(\{1\})\cong\mathbb Z$ to be the identity, we then get that
$$ Ind_{\{1\}}^F(Res^F_{\{1\}}(y))=Ind_{\{1\}}^F(1)\cdot y,\ \forall y\in K_n(\mathcal A_\alpha[F])$$
Note that the left hand side of the above identity equals zero  since  $Res^F_{\{1\}}(y)\in K_n(\mathcal A)=0$
by assumption.  Note also that $Ind_{\{1\}}^{F}=[F]$, where $[F]\in Sw(F)$ is the class represented by $\mathbb Z[F]$.  Therefore
$$[F]\cdot y=0, \forall y\in K_n(\mathcal A_\alpha[F])$$
Hence the statement of Part (3) holds for the $K$-theory part. The $L$-theory part can be treated similarly.  This completes the proof of Theorem B.

\subsection{Proof of Theorem C} (1) The only if part is easy. Let us show the if part. Let $H<G$ be a subgroup of finite index. Suppose $H$ satisfies the FJC, we want to show $G$ satisfies the FJC. The first paragraph in the proof of Theorem A shows that $G$ injects into a group of the form $\Gamma=Q\rtimes_\alpha F$, where $Q$ satisfies the FJC and $F$ is a finite group. Therefore, it suffices to show $\Gamma$
satisfies the FJC.  By Theorem \ref{equivalence}, the obstruction category $\mathcal O^{\Gamma}(E_{\mathcal{VC}}\Gamma; \mathcal A)$ of $\Gamma$ is equivalent to the group additive category
$\mathcal O^Q(E_{\mathcal{VC}}\Gamma; \mathcal A)_\alpha[F]$.  Now by Theorem \ref{induction}, the $K$-theory of $\mathcal O^Q(E_{\mathcal{VC}}\Gamma; \mathcal A)_\alpha[F]$ vanishes if and only if the $K$-theory of $\mathcal O^Q(E_{\mathcal{VC}}\Gamma; \mathcal A)_\alpha[P]$ vanishes for every hyperelementary subgroup $P<F$, which is true by the fact that  $\mathcal O^Q(E_{\mathcal{VC}}\Gamma; \mathcal A)_\alpha[P]$ is equivalent to $\mathcal O^{Q\rtimes_\alpha P}(E_{\mathcal{VC}}\Gamma; \mathcal A)$ and  the assumption that $Q\rtimes_\alpha P$ satisfies the FJC whenever $Q$ does.  This shows that
the $K$-theory of  $\mathcal O^{\Gamma}(E_{\mathcal{VC}}\Gamma; \mathcal A)$ vanishes. Therefore $\Gamma$ satisfies the FJC by Theorem \ref{vanishing}. Hence $G$ satisfies the FJC. This completes the proof of part (1). 

(2) The proof of part (2) is completely parallel to the proof of part (1). Therefore we omit the detail.

(3) On the obstruction category level, the proof of part (3) is parallel to the proof of part (3) of Theorem B. 
This then passes down to the assembly map level, which is just the statement of part (3). One has to notice that 
the left hand side of the assembly map, i.e. $H^{G\rtimes -}_n(E_{\mathcal{VC}}\Gamma; \textbf{K}_{\mathcal A})$, is a Green module over the Green ring $Sw(-)$ when they are viewed as functors from the category of subgroups of $P$ to the category of abelian groups.  This can be seen on the categorical level using the controlled algebra approach to the FJC, just as we did for the obstruction category, i.e. 
$K_n\big(\mathcal O^{G\rtimes_\alpha -}(E_{\mathcal{VC}}\Gamma; \mathcal A)\big)\cong K_n\big(\mathcal O^{G}(E_{\mathcal{VC}}\Gamma; \mathcal A)_\alpha[-]\big)$ is a Green module over the Green  ring $Sw(-)$ on the category of subgroups of $P$.  This completes the proof of Theorem C.

\section{Appendix}\label{appendix}  In this appendix, we give a detailed proof of an induction theorem for $K_n(\mathcal A_\alpha[F])$  and $L_n^{<i>}(\mathcal A_\alpha[F])$
$i=2, 1, \cdots, -\infty$, where $\mathcal A$ is an additive category with a right action by the finite group $F$ via $\alpha$, and $L_n^{<i>}$ denotes the $L$-group with decoration $i$.
 The corresponding induction theorem for $K_n(R[F])$ and $L_n^{<i>}(R[F])$ is classical, where $R$ is an associative ring with unit and $F$ acts on $R$ trivially. 
For the more general case here, such an induction theorem is also well-known to experts. Indeed, it is already used in the proof of  \cite[Theorem 8.2]{BFL}. However,  a detailed proof  is not given there and does not exist in the literature to the knowledge of the author.  Therefore, we carry out a detailed proof here for record and for future use. 

Although the main idea of proof of the following theorem  is similar to the classical case, extra verifications and cautions are needed  in the current setting. This is mainly due to the fact that, unlike in the classical setting where all  operations can be easily defined, some constructions  in this more general categorical setting involve certain choices.  We will only deal with the $K$-theory part, the $L$-theory part is completely parallel.

\begin{thm}\label{induction} Let $F$ be a finite group and $\mathcal A$ be an additive category. Suppose $F$ acts on $\mathcal A$ from the right via $\alpha$.  Then for every $n\in\mathbb Z$, every prime number $p$ and every  $i\in\{-\infty, \cdots, -1, 0, 1, 2\}$, there are isomorphisms of algebraic $K$- and $L$-groups:
\begin{enumerate}
\item $$\ds\lim_{\substack{\longrightarrow \\ P\in\mathcal H(F)}}K_n(\mathcal A_\alpha[P])\cong K_n(\mathcal A_\alpha[F])\cong\ds\lim_{\substack{\longleftarrow \\ P\in\mathcal H(F)}}K_{n}(\mathcal A_\alpha[P])$$
$$\ds\lim_{\substack{\longrightarrow \\ P\in\mathcal H_p(F)}}K_n(\mathcal A_\alpha[P])\otimes_{\mathbb Z}\mathbb Z_{(p)}\cong K_n(\mathcal A_\alpha[F])\otimes_{\mathbb Z}\mathbb Z_{(p)}\cong\ds\lim_{\substack{\longleftarrow \\ P\in\mathcal H_p(F)}}K_{n}(\mathcal A_\alpha[P])\otimes_{\mathbb Z}\mathbb Z_{(p)}$$
$$\ds\lim_{\substack{\longrightarrow \\ P\in\mathcal {FC}(F)}}K_n(\mathcal A_\alpha[P]){\otimes}_{\mathbb Z}\mathbb Q\cong K_n(\mathcal A_\alpha[F])\otimes_{\mathbb Z}\mathbb Q\cong\ds\lim_{\substack{\longleftarrow \\ P\in\mathcal {FC}(F)}}K_{n}(\mathcal A_\alpha[P])\otimes_{\mathbb Z}\mathbb Q.$$

\item $$\ds\lim_{\substack{\longrightarrow \\ P\in\mathcal H_2(F)\cup\bigcup_{p\ne 2}\mathcal E_p(P)}}L^{<i>}_n(\mathcal A_\alpha[P])\cong L_n^{<i>}(\mathcal A_\alpha[F])\cong\ds\lim_{\substack{\longleftarrow \\ P\in\mathcal H_2(F)\cup\bigcup_{p\ne 2}\mathcal E_p(P)}}L_{n}^{<i>}(\mathcal A_\alpha[P])$$
$$\ds\lim_{\substack{\longrightarrow \\ P\in\mathcal H_2(F)}}L_n^{<i>}(\mathcal A_\alpha[P])\otimes_{\mathbb Z}\mathbb Z_{(2)}\cong L_n^{<i>}(\mathcal A_\alpha[F])\otimes_{\mathbb Z}\mathbb Z_{(2)}\cong\ds\lim_{\substack{\longleftarrow \\ P\in\mathcal H_2(F)}}L_{n}^{<i>}(\mathcal A_\alpha[P])\otimes_{\mathbb Z}\mathbb Z_{(2)}$$
$$\ds\lim_{\substack{\longrightarrow \\ P\in\bigcup_{p\ne 2}\mathcal E_p(P)}}L_n^{<i>}(\mathcal A_\alpha[P])\otimes_{\mathbb Z}\mathbb Z[1/2]\cong L_n^{<i>}(\mathcal A_\alpha[F])\otimes_{\mathbb Z}\mathbb Z[1/2]\cong\ds\lim_{\substack{\longleftarrow \\ P\in\bigcup_{p\ne 2}\mathcal E_p(P)}}L_{n}^{<i>}(\mathcal A_\alpha[P])\otimes_{\mathbb Z}\mathbb Z[1/2]$$
$$\ds\lim_{\substack{\longrightarrow \\ P\in\mathcal {FC}(F)}}L_n^{<i>}(\mathcal A_\alpha[P]){\otimes}_{\mathbb Z}\mathbb Q\cong L_n^{<i>}(\mathcal A_\alpha[F])\otimes_{\mathbb Z}\mathbb Q\cong\ds\lim_{\substack{\longleftarrow \\ P\in\mathcal {FC}(F)}}L_{n}^{<i>}(\mathcal A_\alpha[P])\otimes_{\mathbb Z}\mathbb Q.$$

\end{enumerate}
\vskip 10pt
where  $\mathcal H(F), \mathcal H_p(F), \mathcal {FC}(F), \mathcal E_p(F)$ denote the sets of all hyperelementray, $p$-hyperelementary, finite cyclic and $p$-elementary subgroups of $F$ respectively; $\mathbb Z_{(p)}$ denotes the localization of $\mathbb Z$ at the prime $p$;  and the limits and colimits are taking with respect to all maps induced by inclusions, or by conjugations on subgroups of  $F$.
\end{thm}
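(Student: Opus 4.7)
The strategy is to recognize the assignment $P\mapsto K_n(\mathcal A_\alpha[P])$, for $P$ a subgroup of $F$, as a Mackey functor which is moreover a Green module over the Swan Green functor $Sw(-)$, and then to apply the classical Swan--Lam--Dress induction theorems. Once the categorical pairings are in place, the direct and inverse limit isomorphisms are formal consequences of Mackey-functor machinery. The $L$-theory statement is obtained by the same argument, replacing $Sw(-)$ by the Dress ring $Dr(-)$.

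\emph{Mackey structure.} For every inclusion $P\leq Q$ of subgroups of $F$ I will define an induction functor $\mathrm{Ind}_P^Q\colon \mathcal A_\alpha[P]\to \mathcal A_\alpha[Q]$ sending an object $A$ to $\bigoplus_{gP\in Q/P}g^*A$, with morphisms twisted by a fixed choice of coset representatives; a restriction functor $\mathrm{Res}^Q_P$ which is the identity on objects and discards the morphism components outside of $P$; and a conjugation functor $c_g\colon A\mapsto g^*A$. Different choices of coset representatives yield naturally isomorphic induction functors and so give well-defined maps on $K_n$. I then need to verify functoriality, naturality under conjugation, and the Mackey double-coset identity
\begin{equation*}
\mathrm{Res}^Q_R\circ \mathrm{Ind}_P^Q \;\cong\; \bigoplus_{g\in R\backslash Q/P}\mathrm{Ind}_{R\cap gPg^{-1}}^R\circ c_g\circ \mathrm{Res}^P_{g^{-1}Rg\cap P},
\end{equation*}
each check reducing to a direct inspection of the morphism formula in Definition \ref{groupcat}.

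\emph{Green module structure.} For a finitely generated $\mathbb Z[P]$-module $M$ which is projective over $\mathbb Z$ and an object $A$ of $\mathcal A_\alpha[P]$, I will define $M\otimes A:=A^{\oplus r}$, where $r=\mathrm{rank}_{\mathbb Z}M$, with the $P$-twisting on morphisms dictated by a chosen $\mathbb Z$-basis of $M$ and the $P$-action on that basis. After checking independence of the basis at the level of $K_n$, this furnishes a unital associative pairing $Sw(P)\otimes K_n(\mathcal A_\alpha[P])\to K_n(\mathcal A_\alpha[P])$. The critical compatibility is Frobenius reciprocity
\begin{equation*}
\mathrm{Ind}_P^Q\bigl(x\cdot \mathrm{Res}^Q_P(y)\bigr)\;=\;\mathrm{Ind}_P^Q(x)\cdot y,\qquad x\in Sw(P),\; y\in K_n(\mathcal A_\alpha[Q]),
\end{equation*}
together with the parallel identity with the roles of module and ring reversed. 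These upgrade $K_n(\mathcal A_\alpha[-])$ to a Green module over $Sw(-)$.

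\emph{Induction and conclusion.} By Swan's classical theorem the unit $1\in Sw(F)$ lies in $\sum_{P\in\mathcal H(F)}\mathrm{Ind}_P^F\,Sw(P)$; Dress's refinements supply the corresponding generation statements for $Dr(F)$, for the $p$-local version using $\mathcal H_p(F)$, and for the rational version using $\mathcal{FC}(F)$. Writing $1=\sum_P\mathrm{Ind}_P^F(x_P)$, Frobenius reciprocity gives, for every $y\in K_n(\mathcal A_\alpha[F])$,
\begin{equation*}
y\;=\;\sum_P\mathrm{Ind}_P^F\bigl(x_P\cdot \mathrm{Res}^F_P(y)\bigr),
\end{equation*}
which proves surjectivity of $\bigoplus_P\mathrm{Ind}_P^F$, and hence of the colimit map. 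Injectivity of the colimit map, together with the dual inverse-limit statements, then follows from Dress's formal argument iterating the Mackey double-coset formula on kernels and cokernels (cf.\ \cite{DA2}). The principal difficulty is that in the categorical setting the constructions $\mathrm{Ind}_P^Q A$ and $M\otimes A$ require auxiliary choices (coset representatives, $\mathbb Z$-bases) which are invisible in the classical ring setting where the group ring is canonical; the main bookkeeping lies in arranging that all the Mackey and Green compatibilities hold, at least up to coherent natural isomorphism, so that they descend to honest equalities on $K_n$. Once that is in place, the induction-theoretic conclusion is formal.
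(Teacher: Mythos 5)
Your overall strategy --- exhibit $P\mapsto K_n(\mathcal A_\alpha[P])$ as a Mackey functor, make it a Green module over $Sw(-)$ by tensoring with $\mathbb Z[P]$-lattices, and then invoke the Swan/Dress induction theorems --- is exactly the paper's, and the Green-module and induction-theoretic parts of your outline are sound. However, your concrete definitions of induction and restriction are interchanged, and as stated your restriction functor does not exist. Since the objects of $\mathcal A_\alpha[P]$ and $\mathcal A_\alpha[Q]$ coincide (both are the objects of $\mathcal A$, cf.\ Definition \ref{groupcat}), induction $\mathrm{Ind}_P^Q$ is simply the inclusion $\mathcal A_\alpha[P]\hookrightarrow\mathcal A_\alpha[Q]$: in module terms $R_\alpha[Q]\otimes_{R_\alpha[P]}R_\alpha[P]^n\cong R_\alpha[Q]^n$ preserves rank, it does not multiply it by $[Q:P]$. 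It is \emph{restriction} that must send an object $A$ to $\bigoplus_{\lambda}a_\lambda^*A$ over a set of coset representatives $a_\lambda$ of $Q/P$, with a morphism $\phi=\sum_{q}\phi^q\cdot q$ going to the matrix whose $(\lambda,\tau)$-entries are $a_\lambda^*(\phi^{a_\tau i a_\lambda^{-1}})$, so that \emph{every} component of $\phi$ is retained somewhere in the matrix.

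Your proposed restriction (``identity on objects, discard the components $\phi^q$ with $q\notin P$'') fails to be a functor: for a composite $\psi\circ\phi$ the $P$-component $(\psi\circ\phi)^p=\sum_{hk=p}k^*(\psi^h)\circ\phi^k$ receives contributions from $h,k\in Q\setminus P$ with $hk\in P$, so truncation does not commute with composition. And your proposed induction, being rank-multiplying, is not the map induced by $R_\alpha[Q]\otimes_{R_\alpha[P]}-$, so the Frobenius reciprocity identities and the double coset formula would come out with the wrong normalization (for instance $\mathrm{Ind}_{\{1\}}^F$ applied to the unit must give multiplication by the class $[F]=[\mathbb Z[F]]$ in $Sw(F)$, which your conventions would not produce). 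Once the two functors are put on the correct sides --- and one checks, as the paper does, that the restriction functor is independent of the choice of coset representatives up to natural isomorphism, and that the double coset formula holds as a direct-sum decomposition of functors which the additivity theorem converts into an equality on $K$-groups --- the remainder of your argument goes through exactly as in the paper.
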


\begin{rem} The Farrell-Jones conjecture can be viewed as a generalization of the above induction theorem to the infinite group case. 
\end{rem}

\begin{proof}

The main idea of the proof is to show for all $n\in\mathbb Z$,  the assignment 
$$I\longrightarrow K_n(\mathcal A_\alpha[I])$$
where $I$ runs over all subgroups of $F$, defines a \textit{Mackey functor} and it is a \textit{Green Module} over the \textit{Green functor}
$$I\longrightarrow Sw(I)$$
of the Swan rings of subgroups of $F$. Then the general induction theory of  Mackey functors, Green functors and Green modules imply the isomorphisms above.  See \cite{SR},\cite{LT},\cite{Gj},\cite{DA1},\cite{DA2} for original references of the theory. We also refer the reader to \cite{Webb},\cite[Chapter 11]{Oliver} for a friendly introduction. There are two equivalent definitions of Mackey functors, see \cite[Section 2]{Webb}.
We adopt the first one as given in \cite[Section 2]{Webb} since it is more concrete. 

In order to show $I\longrightarrow K_n(\mathcal A_\alpha(I))$ defines a Mackey functor, we have to construct for any inclusion $I\subseteq J$ of subgroups of $F$, an induction map
$$Ind_I^J:  K_n(\mathcal A_\alpha [I])\longrightarrow K_n(\mathcal A_\alpha[J])$$
and a restriction map
$$Res_I^J:   K_n(\mathcal A_\alpha [J])\longrightarrow K_n(\mathcal A_\alpha[I])$$
and for any $f\in F, I<F$, a conjugation map
$$c_f:    K_n(\mathcal A_\alpha [I])\longrightarrow K_n(\mathcal A_\alpha[fIf^{-1}])$$
that satisfy the defining properties of a Mackey functor. 
These three maps will be the induced maps on $K$-groups of  three additive functors, which we still denote by $Ind_I^J$,
$Res_I^J$ and $c_f$
$$Ind_I^J:  \mathcal A_\alpha [I]\longrightarrow \mathcal A_\alpha[J]$$
$$Res_I^J: \mathcal A_\alpha [J]\longrightarrow \mathcal A_\alpha[I]$$
$$c_f:    \mathcal A_\alpha [I]\longrightarrow \mathcal A_\alpha[fIf^{-1}]$$
that we are going to construct now.

The induction functor $Ind_I^J$ is just the inclusion of $\mathcal A_\alpha[I]$ into $\mathcal A_\alpha[J]$. The restriction functor  will depend on the choice of a complete set of representatives of the left coset spaces $J/I$, but we will show that the induced maps on $K$-groups is independent of such a choice. So  let $r=|J/I|$ and  $\{a_1, a_2,\cdots, \ a_r\}\subseteq J$ be a complete set of representatives of the left coset spaces $J/I$.  We define 
$$Res_I^J: \mathcal A_\alpha [J]\longrightarrow \mathcal A_\alpha[I]$$
as follows.

On objects:  for any object $A$ in $\mathcal A$,  we define  
$$Res^J_I(A):=\displaystyle\bigoplus_{\lambda=1}^ra_\lambda^*A$$

On morphisms: for any morphism $\phi=\displaystyle\sum_{j\in J}\phi^j\cdot j: A\rightarrow B$, where $\phi^j: A\rightarrow j^*B$,   we define $$Res^J_I(\phi):=\displaystyle\sum_{i\in I}Res_I^J(\phi)^i\cdot i:\  \displaystyle\bigoplus_{\lambda=1}^ra_\lambda^*A\longrightarrow \displaystyle\bigoplus_{\tau=1}^ra_\tau^*B$$
by defining 
$$Res_I^J(\phi)^i:\ \displaystyle\bigoplus_{\lambda=1}^ra_\lambda^*A\longrightarrow \displaystyle\bigoplus_{\tau=1}^ri^*a_\tau^*B=\bigoplus_{\tau=1}^r(a_\tau i)^*B$$
to be the map whose $(\lambda, \tau)$-component  is the map
$$a_\lambda^*(\phi^{a_\tau ia_\lambda^{-1}}):\ a_\lambda^*A\longrightarrow(a_\tau i)^*B$$

We have to show $Res_I^J$ is a genuine additive functor.  If $\phi$ is the identity morphism, then $\phi^j=0$ unless $j=e$, for which $\phi^e=Id$.  Therefore $\phi^{a_{\tau}ia_{\lambda}^{-1}}=0$ unless $a_\tau ia_{\lambda}^{-1}=e$, for which we must have $i=e$ and $\tau=\lambda$.
Hence $Res_I^J(\phi)^i=0$ unless $i=e$ for which $Res_I^J(\phi)^i=Id$. Thus $Res_I^J(\phi)$ is the identity morphism if $\phi$ is.  

Now let $$\phi=\displaystyle\sum_{j\in J}\phi^j\cdot j: A\rightarrow B, \ \ \psi=\displaystyle\sum_{j\in J}\psi^j\cdot j: B\rightarrow C$$ be two morphisms in $\mathcal A_\alpha[J]$. 
For any $i\in I$,  the $(\lambda, \tau)$-component of $Res_I^J(\psi\circ\phi)^i$ is given by
$$a_\lambda^*(\psi\circ\phi)^{a_\tau ia_\lambda^{-1}}:\ a_\lambda^*A\longrightarrow(a_\tau i)^*C$$
which is equal to
$$a_\lambda^*\Big(\displaystyle\sum_{s, t\in J, st=a_\tau i a_\lambda^{-1}}t^*(\psi^s)\circ\phi^t\Big)=\displaystyle\sum_{s, t\in J, st=a_\tau i a_\lambda^{-1}}a_\lambda^*t^*(\psi^s)\circ a_\lambda^*(\phi^t)$$
and the $(\lambda, \tau)$-component of $\big(Res_I^J(\psi)\circ Res_I^J(\phi)\big)^i$ is given by 
$$\displaystyle\sum_{\sigma=1}^r\displaystyle\sum_{k,l\in I, kl=i}l^*a_\sigma^*(\psi^{a_\tau k a_\sigma^{-1}})\circ a^*_\lambda(\phi^{a_\sigma l a_\lambda^{-1}})$$

Now in the above sum, we let $t=a_\sigma l a_\lambda^{-1}$.  Then as $\sigma$ runs from $1$ to $r$ and $l$ runs over $I$, $t$ runs over $J$. Also note that $a_\tau k a_\sigma^{-1}=a_\tau ia_\lambda^{-1}t^{-1}$.  Hence the above sum becomes

$$\displaystyle\sum_{t\in J}a_\lambda^*t^*(\psi^{a_\tau i a_\lambda^{-1}t^{-1}})\circ a^*_\lambda(\phi^{t})=\displaystyle\sum_{s, t\in J, st=a_\tau i a_\lambda^{-1}}a_\lambda^*t^*(\psi^s)\circ a_\lambda^*(\phi^t)$$
which is the $(\lambda, \tau)$-component of $Res_I^J(\psi\circ\phi)^i$. This shows $Res_I^J(\psi\circ\phi)=Res_I^J(\psi)\circ Res_I^J(\phi)$ and thus $Res_I^J$ is a genuine functor. Finally the additivity of $Res_I^J$ is trivial. 

Next we show the induced map of $Res_I^J$ on the $K$-groups is independent of the choice of $\{a_1, a_2, \cdots, a_r\}\subseteq J$. So let $\{a_1i_1, a_2i_2, \cdots, a_ri_r\}$ be another complete set of representatives, where $i_1, i_2, \cdots, i_r\in I$. We denote the corresponding restriction functor by $\overline{Res}_I^J$ and show that it is naturally isomorphic to the functor $Res_I^J$. So let $A$ be any object in $\mathcal A$. We define a morphism 
$$\xi_A=\sum_{i\in I}\xi_A^i\cdot i:\  Res_I^J(A)=\displaystyle\bigoplus_{\lambda=1}^ra_\lambda^*A\longrightarrow \overline{Res}_I^J(A)=\displaystyle\bigoplus_{\tau=1}^r(a_\tau i_\tau)^*A$$
by defining 
$$\xi_A^i: \ \displaystyle\bigoplus_{\lambda=1}^ra_\lambda^*A\longrightarrow \displaystyle\bigoplus_{\tau=1}^ri^*(a_\tau i_\tau)^*A=\displaystyle\bigoplus_{\tau=1}^r(a_\tau i_\tau i)^*A $$
to be the map whose $(\lambda, \tau)$-component
$$\xi^i_{A, \lambda, \tau}:\ a_\lambda^*A\longrightarrow (a_\tau i_\tau i)^*A$$ 
is the identity if $a_\lambda=a_\tau i_\tau i$  (which can happen if and only if $\lambda=\tau$ and $i_\tau=i^{-1}$) and to be $0$ otherwise. It's easy to see that $\xi_A$ is an isomorphism in $\mathcal A_\alpha[I]$. We show it indeed is a
natural isomorphism between  $Res_I^J$ and $\overline{Res}_I^J$. 

So let 
$$\phi=\displaystyle\sum_{j\in J}\phi^j\cdot j: A\longrightarrow B$$
be any morphism in $\mathcal A_\alpha[J]$.  For any $i\in I$,  the $(\lambda, \tau)$-component of $(\overline{Res}_I^J(\phi)\circ\xi_A)^i$ is given by
$$\displaystyle\sum_{\sigma=1}^r\sum_{s, t\in I, st=i}(a_\sigma i_\sigma t)^*(\phi^{a_\tau i_\tau si_\sigma^{-1}a_\sigma^{-1}})\circ\xi^t_{A, \lambda, \sigma}$$
whose only non-zero  summand is when $\sigma=\lambda$, $t=i_\lambda^{-1}$ and  the above sum simplifies to 
$$a_\lambda^*(\phi^{a_\tau i_\tau i a_{\lambda}^{-1}})$$
 The  $(\lambda, \tau)$-component of $\big(\xi_B\circ Res_I^J(\phi)\big)^i$ is given by
$$\displaystyle\sum_{\sigma=1}^r\sum_{s, t\in I, st=i}t^*(\xi^s_{B, \sigma, \tau})\circ a_\lambda^*(\phi^{a_\sigma ta_\lambda^{-1}})$$
whose only non-zero summand is when $\sigma=\tau$ and $s=i_\tau^{-1}$. In this case $t=i_\tau i$. Hence the above sum also simplifies to
$$a_\lambda^*(\phi^{a_\tau i_\tau i a_{\lambda}^{-1}})$$
This shows $\overline{Res}_I^J(\phi)\circ\xi_A=\xi_B\circ Res_I^J(\phi)$ and thus $Res_I^J$ and $\overline{Res}_I^J$ are naturally isomorphic. Therefore they induce the same map on $K$-groups. Hence the induced map on $K$-groups is independent of the choice of the complete set of representatives. 

We now turn into the construction of the conjugation functor
$$c_f:    \mathcal A_\alpha [I]\longrightarrow \mathcal A_\alpha[fIf^{-1}]$$

On objects: $c_f(A):=(f^{-1})^*A$ for any object $A$ in $\mathcal A$.

On morphisms: let $\phi=\displaystyle\sum_{i\in I}\phi^i\cdot i:\ A\longrightarrow B$ be a morphism in $\mathcal A_\alpha[I]$, where $\phi^i: A\longrightarrow i^*B$ is a morphism in $\mathcal A$.  Define 
$$c_f(\phi)=\displaystyle\sum_{i\in I}c_f(\phi)^{fif^{-1}}\cdot fif^{-1}: (f^{-1})^*A\longrightarrow (f^{-1})^*B$$
by defining 
$$c_f(\phi)^{fif^{-1}}:\ (f^{-1})^*A\longrightarrow (fif^{-1})^*(f^{-1})^*B=(f^{-1})^*i^*B$$
to be $(f^{-1})^*(\phi^i)$. It is not hard to verify $c_f$ is a genuine additive functor.  Hence it induces a map on $K$-groups.

We now show the above three maps on $K$-groups satisfy the defining properties of a Mackey functor.

(1) \textit{$Ind_I^I, Res_I^I, c_f, $ are the identity maps for all $I<F, f\in I$:} this is trivial for $Ind_I^I$ and $Res_I^I$. In order to show $c_f, f\in I$ is the identity map, we show the corresponding functor 
$$c_f:    \mathcal A_\alpha [I]\longrightarrow \mathcal A_\alpha[I]$$
is naturally isomorphic to the identity functor.  So for any object $A$ in $\mathcal A$, we define 
$$\eta_A=\displaystyle\sum_{i\in I}\eta_A^i\cdot i:\ A\longrightarrow (f^{-1})^*(A) $$
be defining $$\eta_A^i:\ A\longrightarrow i^{*}(f^{-1})^*(A)=(f^{-1}i)^*(A)$$
to be zero if $i\ne f$ and to be the identity if $i=f$. This is clearly an isomorphism in $\mathcal A_\alpha[I]$. It's also not hard to show
$c_f(\phi)\circ\eta_A=\eta_B\circ\phi$. Hence $c_f$ is naturally isomorphic to the identity functor. Thus it induces the identity on $K$-groups.
\vskip 5pt
(2) \textit{$Res_I^J\circ Res_J^K=Res_I^K$ for all $I<J<K<F$}: let $\{a_1, \cdots, a_r\}$ be a complete set of representatives for $K/J$ and $\{b_1, \cdots, b_s\}$ be a complete set of representatives for $J/I$. Then $\{a_1b_1, a_1b_2, \cdots, a_rb_s\}$ is a complete set of representatives for $K/I$. Then the corresponding restriction functors satisfy $Res_I^J\circ Res_J^K=Res_I^K$. Since the induced maps on $K$-groups are independent of the choices of the complete sets of representatives, we have $Res_I^J\circ Res_J^K=Res_I^K$ on $K$-groups. 
\vskip 5pt
(3) \textit{$Ind_J^K\circ Ind_I^J=Ind_I^K$ for all $I<J<K<F$}:  this is trivial by definition.
\vskip 5pt
(4) $c_f\circ c_g=c_{fg}$ for all $f, g\in F$: this is trivial by definition.
\vskip 5pt
(5) $Res_{fIf^{-1}}^{fJf^{-1}}\circ c_f=c_f\circ Res_I^J$ for all $I<J$ and $f\in F$: this is easy by definition.
\vskip 5pt
(6) $Ind_{fIf^{-1}}^{fJf^{-1}}\circ c_f=c_f\circ Ind_I^J$  for all $I<J$ and $f\in F$: this is trivial by definition.
\vskip 5pt
(7) The double coset formula holds, i.e. 
$$Res_J^K\circ Ind_I^K=\displaystyle\sum_{[f]\in J\backslash K/I }Ind_{J\cap fIf^{-1}}^{J}\circ c_f\circ Res_{f^{-1}Jf\cap I}^I$$
 for all $I<K, J<K$:  let $\{f_1, \cdots, f_r\}\subseteq K$ be a set of complete representatives for the double coset space $J\backslash K/I$, then $\{f_1^{-1}, \cdots, f_r^{-1}\}$ is a complete set of representatives for $I\backslash K/J$. For each $\lambda\in\{1, \cdots, r\}$, let $\{a_{\lambda 1}, \cdots, a_{\lambda s_\lambda}\}\subseteq I$ be a complete set of representatives for $I/(f_\lambda^{-1}Jf_\lambda\cap I)$, then 
$\{a_{\lambda 1}f_\lambda^{-1}J, \cdots, a_{\lambda s_\lambda}f_\lambda^{-1}J\}\subseteq K/J$ is the orbit of $f_\lambda^{-1}J$ under the left action of $I$ on $K/J$. Hence $\bigcup_{\lambda=1}^r\{a_{\lambda 1}f_\lambda^{-1}, \cdots, a_{\lambda s_\lambda}f_\lambda^{-1}\}\subseteq K$ is a complete set of representatives for $K/J$.  Using these complete sets of representatives, it is then easy to see that on the level of categories, we have the equality of functors: 
$$Res_J^K\circ Ind_I^K=\displaystyle\sum_{[f]\in J\backslash K/I }Ind_{J\cap fIf^{-1}}^{J}\circ c_f\circ Res_{f^{-1}Jf\cap I}^I$$
Therefore by the additivity theorem of algebraic $K$-theory \cite[Page 401]{We1}, the above equality holds on the level of $K$-groups. 

We now have completed the proof that for all $n\in\mathbb Z$,  $K_n(\mathcal A_\alpha[-])$ is a Mackey functor on the subgroups of $F$. Next we show it is a Green module over the Green functor $Sw(-)$ of Swan rings of subgroups of $F$. In order to do this, it is convenient to introduce the following. 
Let $I<F$ be any subgroup. Define $\mathcal F(\mathbb Z[I])$ to be the exact category whose objects are $(\mathbb Z^m, \beta)$, where $m\ge 0$ is an integer and $\beta: I\longrightarrow GL_m(\mathbb Z)$ is an $I$-action on $\mathbb Z^m$. We denote $\beta(i)$  by  $M_{\beta(i)}$ to indicate it is a matrix.  The morphism set from $(\mathbb Z^{m_1}, \beta_1)$ to $(\mathbb Z^{m_2}, \beta_2)$ consists of  all $m_2\times m_1$ matrices $M$ with integer entries so that $MM_{\beta_1(i)}=M_{\beta_2(i)}M, \forall i\in I$.  Compositions of morphisms are given by multiplications of matrices. A sequence 
$$0\longrightarrow (\mathbb Z^{m_1}, \beta_1)\longrightarrow (\mathbb Z^{m}, \beta)\longrightarrow (\mathbb Z^{m_2}, \beta_2)\longrightarrow 0$$
of morphisms in $\mathcal F(\mathbb Z[I])$ is said to be \textit{exact} if the corresponding sequence 
$$0\longrightarrow \mathbb Z^{m_1}\longrightarrow \mathbb Z^{m}\longrightarrow \mathbb Z^{m_2}\longrightarrow 0$$
of abelian groups is exact.  This defines an exact structure on $\mathcal F(\mathbb Z[I])$. Note that the operation $(\mathbb Z^{m_1}, \beta_1)\otimes(\mathbb Z^{m_2}, \beta_2):=(\mathbb Z^{m_1m_2}, \beta_1\otimes\beta_2)$, where $M_{(\beta_1\otimes\beta_2)(i)}=M_{\beta_1(i)}\otimes M_{\beta_2(i)}\in GL_{m_1m_2}(\mathbb Z)$ defines
a ``tensor product" in $\mathcal F(\mathbb Z[I])$.    Now the Swan ring $Sw(I)$ can be identified with the Grothendieck group of $\mathcal F(\mathbb Z[I])$ and the ring structure on $Sw(I)$ is induced by the above tensor product. It is well-known that $Sw(-)$ is a Green functor on subgroups of $F$. 

We now construct a bi-exact functor
$$\Theta: \mathcal F(\mathbb Z[I])\times \mathcal A_\alpha[I]\longrightarrow \mathcal A_\alpha[I]$$
as follows (the following construction depends on choices of direct sums in $\mathcal A_\alpha[I]$, which is not a problem):

On objects: $\Theta((\mathbb Z^m, \beta), A)=A^m$. 

On morphisms: let $M: (\mathbb Z^{m_1}, \beta_1)\longrightarrow (\mathbb Z^{m_2}, \beta_2)$ and $\phi=\displaystyle\sum_{i\in I}\phi^i\cdot i: A\longrightarrow B$ be morphisms in $\mathcal F(\mathbb Z[I])$ and  $\mathcal A_\alpha[I]$ respectively,  where $\phi^i: A\longrightarrow i^*B$ is a morphism in $\mathcal A$. We define 

 $$\Theta(M, \phi):=\displaystyle\sum_{i\in I}MM_{\beta_1(i)}\phi^i\cdot i:  A^{m_1}\longrightarrow B^{m_2}$$
where $MM_{\beta_1(i)}\phi^i$ is the matrix obtained by multiplying $\phi^i$ to the entries of the matrix $MM_{\beta_1(i)}$.  Note that we have $MM_{\beta_1(i)}\phi^i=M_{\beta_2(i)}M\phi^i$. 

We check that $\Theta$ is a genuine bi-exact functor. Clearly $\Theta(Id, Id)=Id$. Now let 
$$M_1: (\mathbb Z^{m_1}, \beta_1)\longrightarrow (\mathbb Z^{m_2}, \beta_2),\ \ \ \ M_2: (\mathbb Z^{m_2}, \beta_2)\longrightarrow (\mathbb Z^{m_3}, \beta_3)$$
be two morphisms in $\mathcal F(\mathbb Z[I])$ and 
$$\phi=\displaystyle\sum_{i\in I}\phi^i\cdot i: A\rightarrow B, \ \ \ \ \psi=\displaystyle\sum_{i\in I}\psi^i\cdot i: B\rightarrow C$$
 be two morphisms in $\mathcal A_\alpha[I]$.
Then 
$$\Theta\big((M_2M_1, \psi\circ\phi)\big)=\displaystyle\sum_{i\in I}M_2M_1M_{\beta_1(i)}(\psi\circ\phi)^i\cdot i$$
and 
\begin{align*}
\Theta(M_2, \psi)\circ\Theta(M_1, \phi) &=\displaystyle\sum_{i\in I}\Big(\displaystyle\sum_{s, t\in I, st=i}t^*(M_2M_{\beta_2(s)}\psi^s)\circ(M_1M_{\beta_1(t)}\phi^t)\Big)\cdot i\\
&=\displaystyle\sum_{i\in I}\Big(\displaystyle\sum_{s, t\in I, st=i}M_2M_1M_{\beta_1(s)}M_{\beta_1(t)}\big((t^*\psi^s)\circ\phi^t\big)\Big)\cdot i\\
&=\displaystyle\sum_{i\in I}M_2M_1M_{\beta_1(i)}\big(\displaystyle\sum_{s, t\in I, st=i}(t^*\psi^s)\circ\phi^t\big)\cdot i\\
&=\displaystyle\sum_{i\in I}M_2M_1M_{\beta_1(i)}(\psi\circ\phi)^i\cdot i\\
\end{align*}
This shows $\Theta\big((M_2M_1, \psi\circ\phi)\big)=\Theta\big((M_2M_1, \psi\circ\phi)\big)$ and therefore $\Theta$ is a genuine bi-functor. 

For any object $(\mathbb Z^m, \alpha)$ of $\mathcal F(\mathbb Z[I])$, the functor  $\Theta((\mathbb Z^m, \alpha), -)$ is additive, hence it is exact since the exact structure on $\mathcal A_\alpha[I]$ is split exact.   Now for any object $A$ in $\mathcal A_\alpha[I]$ and any short exact sequence (possibly not split exact)
$$
\xymatrix{
0\ar[r] &(\mathbb Z^{m_1}, \beta_1) \ar[r]^-{M_1} & (\mathbb Z^{m}, \beta)\ar[r]^{M_2} & (\mathbb Z^{m_2}, \beta_2)\ar[r] & 0 \\
}$$
in $\mathcal F(\mathbb Z[I])$. The sequence in $\mathcal A_\alpha[I]$ obtained by applying the functor $\Theta(-,  A)$ to the above sequence is 
$$
\xymatrix{
0\ar[r] & A^{m_1}\ar[r]^-{M_1} & A^{m_2} \ar[r]^{M_2}& A^{m_3}\ar[r] & 0 \\
}$$
which is split exact since the short exact sequence
$$
\xymatrix{
0\ar[r] & \mathbb Z^{m_1}\ar[r]^-{M_1} & \mathbb Z^{m_2} \ar[r]^{M_2}& \mathbb Z^{m_3}\ar[r] & 0 \\
}$$
of free abelian groups splits.  Therefore the functor $\Theta(-, A)$ is also exact. Hence the bi-functor $\Theta$
is bi-exact.  Therefore, by the additivity theorem of algebraic $K$-theory,  $\Theta$ induces a pairing, which we still denote by $\Theta$, of abelian groups
$$\Theta: Sw(I)\times K_n(\mathcal A_\alpha[I])\longrightarrow K_n(\mathcal A_\alpha[I])$$
Note that as a bi-exact functor, $\Theta$ has the property  that 
$$\Theta((\mathbb Z^{m_1}, \beta_1)\otimes(\mathbb Z^{m_2}, \beta_2), -)=\Theta((\mathbb Z^{m_1}, \beta_1), -)\circ\Theta((\mathbb Z^{m_2}, \beta_2), -)$$
Therefore the above paring defines a $Sw(I)$-module structure on $K_n(\mathcal A_\alpha[I])$. For brevity, we denote $\Theta(x, y)$ by $x\cdot y$ for any $x\in Sw(I), y\in K_n(\mathcal A_\alpha[I])$. 

Now in order to show $\Theta$ defines a Green module structure on the Mackey functor $K_n(\mathcal A_\alpha[-])$ over the Green functor $Sw(-)$, we have to show the \textit{Frobenius reciprocity laws} hold, i.e. we have to show for any inclusion $I\subseteq J$ of subgroups of $F$, the following identities hold:
\begin{align}\label{frobenius1}
 Ind_I^J(Res^J_I(x)\cdot y)=x\cdot Ind_I^J(y), \text{for any}\ x\in Sw(J), y\in K_n(\mathcal A_\alpha[I]).\\
\label{frobenius2}
 Ind_I^J(x\cdot Res^J_I(y))=Ind_I^J(x)\cdot y, \text{for any}\ x\in Sw(I), y\in K_n(\mathcal A_\alpha[J]).
\end{align}

%

The identity \ref{frobenius1} follows directly from the fact that the following compositions of functors equal:
$$
\xymatrix{
\mathcal F(\mathbb Z[J])\times \mathcal A_\alpha[I] \ar[rr]^{Res^J_I\times Id} &&\mathcal F(\mathbb Z[I])\times\mathcal A_\alpha[I]\ar[r]^{\ \ \ \ \ \ \Theta} & \mathcal A_\alpha[I]\ar[r]^{Ind_I^J}& \mathcal A_\alpha[J]
}
$$
$$
\xymatrix{
\mathcal F(\mathbb Z[J])\times \mathcal A_\alpha[I] \ar[rr]^{Id\times Ind^J_I} &&\mathcal F(\mathbb Z[J])\times \mathcal A_\alpha[J]\ar[r]^{\ \ \ \ \ \ \Theta} &\mathcal A_\alpha[J] 
}
$$
 where $$Res_I^J: \mathcal F(\mathbb Z[J])\longrightarrow \mathcal F(\mathbb Z[I])$$
is the obvious restriction functor. 

The identity \ref{frobenius2} requires a little more work, since it involves the induction map
$$Ind_I^J: Sw(I)\longrightarrow Sw(J)$$ 
which is induced by an induction functor 
$$Ind_I^J: \mathcal F(\mathbb Z[I])\longrightarrow \mathcal F(\mathbb Z[J])$$
However, the definition of the induction functor requires a choice of a complete set of representatives for $J/I$. Note however that the restriction functor in \ref{frobenius2}  also involves such a choice. If we choose them to be the same, then the second equality follows easily. We omit the details. 

We now have completed the verification work that the Mackey functor $K_n(\mathcal A_\alpha[-])$ is a Green module over the Green ring $Sw(-)$.  Now the isomorphisms as stated in the theorem follow from the general theory of induction, see for example \cite[Theorem 11.1]{Oliver}, and the corresponding induction theorem for the Swan ring $Sw(F)$, see \cite[Corollary 4.2]{SR}, \cite[proof of Lemma 4.1]{BL1}.

For the $L$-theory case, we use the Dress ring $Dr(F)$ instead of the Swan ring $Sw(F)$. One can similarly define an action of the Green ring $Dr(-)$ on the Mackey functors $L^{<i>}_n(\mathcal A_\alpha[-])$. Then the general theory of induction and the  induction theorem for $Dr(-)$, see \cite[Theorem 1]{DA2}, imply the corresponding isomorphisms for $L$-groups. This completes the proof of the theorem.
\end{proof}

\frenchspacing

\bibliographystyle{plain}
\bibliography{kun}

\end{document}